\newtheorem{theorem}{Theorem}
\newtheorem{prop}[theorem]{Proposition}
\theoremstyle{definition}
\theoremstyle{remark}
\newtheorem{example}{Example}[section]
\newcommand*{\Ptens}{\mathop{\widehat\otimes}}
\newcommand*{\Tens}{\mathop{\otimes}}
\newcommand*{\Smash}{\mathop{\#}}
\newcommand*{\Psmash}{\mathop{\widehat{\#}}}
\newcommand*{\id}{1}
\newcommand*{\wh}{\widehat}
\newcommand*{\wt}{\widetilde}
\newcommand*{\la}{\langle}
\newcommand*{\ra}{\rangle}
\newcommand*{\CC}{\mathbb C}
\newcommand*{\N}{\mathbb N}
\newcommand*{\Z}{\mathbb Z}
\newcommand*{\cO}{\mathscr O}
\newcommand*{\fg}{\mathfrak g}
\newcommand*{\eps}{\varepsilon}
\newcommand*{\xra}{\xrightarrow}
\begin{document}
\title[The Arens-Michael envelope of a smash product]{The Arens-Michael
envelope\\ of a smash product}
\subjclass[2010]{46H05, 16S40}
\author{A. Yu. Pirkovskii}
\address{Department of Geometry and Topology,
Faculty of Mathematics\\
State University -- Higher School of Economics\\
Vavilova 7, 117312 Moscow, Russia}
\email{aupirkovskii@hse.ru, pirkosha@online.ru}
\thanks{This work was partially supported by the RFBR grant 08-01-00867,
by the Ministry of Education and Science of Russia (programme ``Development of the scientific
potential of the Higher School'', grant no. 2.1.1/2775),
and by the President of Russia grant MK-1173.2009.1.}
\date{}
\begin{abstract}
Given a Hopf algebra $H$ and an $H$-module
algebra $A$, we explicitly describe the Arens-Michael envelope of
the smash product $A\Smash H$ in terms of the Arens-Michael envelope of $H$
and a certain completion of $A$.
We also give an example (Manin's quantum plane) showing that the result
fails for non-Hopf bialgebras.
\end{abstract}
\maketitle

\section{Introduction}

The Arens-Michael envelope \cite{T1,X2} of an associative $\CC$-algebra $A$ is the
completion of $A$ with respect to the family of all submultiplicative seminorms on $A$.
For example \cite{T2}, the Arens-Michael envelope of the polynomial algebra
$\CC[t_1,\ldots ,t_n]$ is the algebra of holomorphic functions on $\CC^n$.
More generally \cite{Pir_qfree}, if $A$ is the algebra
of regular (i.e., polynomial) functions on a complex affine algebraic variety $V$, then
the Arens-Michael envelope of $A$ is the algebra of holomorphic functions on $V$.
This result suggests that the Arens-Michael envelope of a ``quantized polynomial
algebra'' (see, e.g., \cite{Art,Good_surv}) can be viewed as a
``quantized algebra of holomorphic functions''. From this point of view,
Arens-Michael envelopes can be potentially useful for the
development of noncommutative complex analytic geometry.
For further information on Arens-Michael envelopes, we refer
to \cite{Pir_qfree,Pir_stb,Pir_smash}.

In this short note, we extend our earlier result obtained in \cite{Pir_smash}.
Let $H$ be a Hopf algebra, let $A$ be an $H$-module algebra, and let
$A\Smash H$ denote the smash product of $A$ by $H$.
Assuming that $H$ is cocommutative, we proved
\cite[Theorem 2.2]{Pir_smash} that the Arens-Michael envelope of
$A\Smash H$ is isomorphic to the analytic smash product of
the ``$H$-completion'' of $A$ by
the Arens-Michael envelope of $H$.
Here our goal is to show that the result holds without
the cocommutativity assumption, but fails for non-Hopf bialgebras.

\section{Preliminaries}

We shall work over the complex numbers $\CC$. All associative algebras
and algebra homomorphisms are assumed to be unital.
Modules over algebras are also assumed to be unital
(i.e., $1\cdot x=x$ for each left $A$-module $X$ and for each $x\in X$).

By a topological algebra we mean a topological vector space $A$
together with the structure of an associative algebra such that
the multiplication map $A\times A\to A$ is separately continuous.
A complete, Hausdorff, locally convex topological algebra with
jointly continuous multiplication is called a {\em $\Ptens$-algebra} \cite{T1,X1}.
If $A$ is a $\Ptens$-algebra, then
the multiplication $A\times A\to A$ extends
to a continuous linear map from the completed projective tensor
product $A\Ptens A$ to $A$. In other words, a
$\Ptens$-algebra is just an
algebra in the tensor category $(\mathbf{LCS},\Ptens)$
of complete Hausdorff locally convex spaces.
This observation can be used to define $\Ptens$-coalgebras,
$\Ptens$-bialgebras, and Hopf $\Ptens$-algebras; see, e.g., \cite{BFGP}.

If $A$ is a $\Ptens$-algebra, then a {\em left $A$-$\Ptens$-module}
is a complete, Hausdorff locally convex space $X$ together with the
structure of a left $A$-module such that the action
$A\times X\to X$ is jointly continuous. Right $A$-$\Ptens$-modules
and $A$-$\Ptens$-bimodules are defined similarly.

Recall that a seminorm $\|\cdot\|$ on an algebra $A$ is
{\em submultiplicative} if $\| ab\|\le\| a\|\| b\|$ for all $a,b\in A$.
A topological algebra $A$ is said to be {\em locally $m$-convex}
if its topology can be defined by a family of submultiplicative seminorms.
Note that the multiplication in a locally $m$-convex algebra is
jointly continuous.
An {\em Arens-Michael algebra} is a complete, Hausdorff,
locally $m$-convex algebra.

Let $A$ be a topological algebra. A pair $(\wh{A},\iota_A)$ consisting of
an Arens-Michael algebra $\wh{A}$ and a continuous homomorphism
$\iota_A\colon A\to\wh{A}$ is called {\em the Arens-Michael envelope} of $A$
\cite{T1,X2}
if for each Arens-Michael algebra $B$ and for each continuous
homomorphism $\varphi\colon A\to B$ there exists a unique
continuous homomorphism $\wh{\varphi}\colon\wh{A}\to B$ making
the following diagram commutative:
\begin{equation*}
\xymatrix{
\wh{A} \ar@{-->}[r]^{\wh{\varphi}} & B \\
A \ar[u]^{\iota_A} \ar[ur]_\varphi
}
\end{equation*}
The Arens-Michael envelope always exists and
can be obtained as the completion\footnote[1]{Here we follow the convention
that the completion of a non-Hausdorff locally convex space $E$
is defined to be the completion of the associated Hausdorff
space $E/\overline{\{ 0\}}$.} of $A$
with respect to the family of all continuous submultiplicative seminorms on $A$
(see \cite{T1} and \cite[Chap. V]{X2}).
This implies, in particular, that $\iota_A\colon A\to\wh{A}$ has dense range.
Clearly, the Arens-Michael envelope is unique in the obvious sense.

Each associative algebra $A$ becomes a topological algebra
with respect to the strongest locally convex topology. The Arens-Michael
envelope, $\wh{A}$, of the resulting topological algebra will be referred
to as the Arens-Michael envelope of $A$. That is, $\wh{A}$ is
the completion of $A$ with respect to the family of {\em all} submultiplicative
seminorms.

If $H$ is a bialgebra (respectively, a Hopf algebra), then it is easy
to show that $\wh{H}$ is a $\Ptens$-bialgebra
(respectively, a Hopf $\Ptens$-algebra) in a natural way (for details, see
\cite{Pir_stb}).

In what follows, we will use standard notation from Hopf algebra theory.
In particular, given a Hopf algebra $H$, the symbols $\mu_H$, $\Delta_H$,
$\eta_H$, $\eps_H$, $S_H$ will denote the multiplication, the comultiplication,
the unit, the counit, and the antipode, respectively.
We will often suppress the subscript ``$H$'', when no
confusion is possible.

Let $H$ be a bialgebra. Recall that an {\em $H$-module algebra} is
an algebra $A$ endowed with the structure of a left $H$-module such that
the product $\mu_A\colon A\Tens A\to A$ and the unit map
$\eta_A\colon \CC\to A$ are $H$-module morphisms.
For example, if $\fg$ is a Lie algebra acting on $A$ by derivations,
then the action $\fg\times A\to A$ extends to a map $U(\fg)\times A\to A$
making $A$ into a $U(\fg)$-module algebra. Similarly, if $G$ is a semigroup
acting on $A$ by endomorphisms, then $A$ becomes a $\CC G$-module algebra,
where $\CC G$ denotes the semigroup algebra of $G$.

Given an $H$-module algebra $A$, the {\em smash product algebra} $A\Smash H$
is defined as follows (see, e.g., \cite{Sweedler}).
As a vector space, $A\Smash H$ is equal to $A\Tens H$.
To define multiplication,
denote by $\mu_{H,A}\colon H\Tens A\to A$ the action of $H$ on $A$,
and define $\tau\colon H\Tens A\to A\Tens H$ as the composition
\begin{equation}
\label{tau}
H\Tens A \xra{\Delta_H\otimes\id_A} H\Tens H\Tens A \xra{\id_H\otimes c_{H,A}}
H\Tens A\Tens H \xra{\mu_{H,A}\otimes\id_H} A\Tens H
\end{equation}
(here $c_{H,A}$ denotes the flip $H\Tens A\to A\Tens H$).
Using Sweedler's notation, we have
\begin{equation}
\label{tau_Sw}
\tau(h\otimes a)=\sum_{(h)} h_{(1)}\cdot a\otimes h_{(2)}.
\end{equation}
Then the map
\begin{equation}
\label{smashprod}
(A\Tens H)\Tens (A\Tens H) \xra{\id_A\otimes\tau\otimes\id_H}
A\Tens A\Tens H\Tens H \xra{\mu_A\otimes\mu_H} A\Tens H
\end{equation}
is an associative multiplication on $A\Tens H$. The resulting algebra
is denoted by $A\Smash H$ and is called the {\em smash product} of $A$ with $H$.
Using \eqref{tau_Sw}, we see that the multiplication on $A\Smash H$ is given by
\begin{equation}
\label{smashprod_Sw}
(a\otimes h)(a'\otimes h')=\sum_{(h)} a (h_{(1)}\cdot a')\otimes h_{(2)}h'.
\end{equation}
In particular, we have
\begin{align}
\label{h=1}
(a\otimes 1)(a'\otimes h')&=aa'\otimes h',\\
\label{a'=1}
(a\otimes h)(1\otimes h')&=a\otimes hh',\\
\label{tau_prod}
(1\otimes h)(a\otimes 1)&=\tau(h\otimes a).
\end{align}
This implies that $A$ and $H$ become subalgebras of $A\Smash H$ via the maps
$a\mapsto a\otimes 1$ and $h\mapsto 1\otimes h$.

Similar definitions apply in the $\Ptens$-algebra case.
Namely, if $H$ is a $\Ptens$-bialgebra, then an {\em $H$-$\Ptens$-module algebra}
is a $\Ptens$-algebra $A$ together with the structure of
a left $H$-$\Ptens$-module such that the product $A\Ptens A\to A$
and the unit map $\CC\to A$ are $H$-module morphisms.
By replacing $\Tens$ with $\Ptens$
in \eqref{tau} and \eqref{smashprod}, we obtain an associative, jointly
continuous multiplication
on $A\Ptens H$. The resulting $\Ptens$-algebra is denoted by $A\Psmash H$
and is called the {\em analytic smash product} of $A$ with $H$.

Let $H$ be an algebra, and let $A$ be a left $H$-module. We say that a seminorm
$\|\cdot\|$ on $A$ is {\em $H$-stable} \cite{Pir_smash} if for each $h\in H$
there exists $C>0$ such that $\| h\cdot a\|\le C\| a\|$ for each $a\in A$.
If $H$ is a bialgebra and $A$ is an $H$-module algebra, then
the {\em $H$-completion} of $A$ is the
completion of $A$ with respect to the family of all $H$-stable, submultiplicative
seminorms. The $H$-completion of $A$ will be denoted by $\wt{A}$.
It is immediate from the definition that $\wt{A}$ is an Arens-Michael algebra.

\begin{prop}[{\cite[Proposition 2.1]{Pir_smash}}]
\label{prop:Psmash_AM}
Let $H$ be a bialgebra, and let $A$ be an $H$-module algebra.
Then the action of $H$ on $A$ uniquely extends to an action of $\wh{H}$
on $\wt{A}$, so that $\wt{A}$ becomes an $\wh{H}$-$\Ptens$-module algebra.
Moreover, the smash product $\wt{A}\Psmash\wh{H}$ is an Arens-Michael algebra.
\end{prop}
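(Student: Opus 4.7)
My plan is to proceed in three steps: (i) extend the action of $H$ on $A$ to an action of $\wh H$ on $\wt A$ using operator seminorms and the universal property of the Arens-Michael envelope; (ii) transport the $H$-module algebra identities to $\wh H$ and $\wt A$ by continuity; and (iii) manufacture a cofinal family of continuous submultiplicative seminorms on $\wt A\Ptens\wh H$ witnessing the Arens-Michael property of the smash product.

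For step (i), I would fix a continuous submultiplicative seminorm $p$ on $\wt A$ and note that, by construction of the $H$-completion, $p|_A$ is $H$-stable. Thus the operator seminorm
$$q_p(h):=\sup\{p(h\cdot a):a\in A,\ p(a)\le 1\}$$
is finite on $H$, and associativity of the action yields $q_p(hh')\le q_p(h)q_p(h')$, so $q_p$ is submultiplicative. By the universal property of $\wh H$, $q_p$ extends uniquely to a continuous submultiplicative seminorm on $\wh H$, which I again denote $q_p$. The estimate $p(h\cdot a)\le q_p(h)p(a)$ then shows that the bilinear map $H\times A\to A$ is continuous for the topologies inherited from $\wh H$ and $\wt A$, so it extends uniquely by density to a jointly continuous map $\wh H\times\wt A\to\wt A$ with $p(\eta\cdot\tilde a)\le q_p(\eta)p(\tilde a)$. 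Step (ii) is then routine: the identities asserting that $\mu_A$ and $\eta_A$ are $H$-linear are equalities of jointly continuous maps in their arguments, so they propagate by density from $H\otimes A\otimes A$ (resp.\ $H$) to $\wh H\Ptens\wt A\Ptens\wt A$ (resp.\ $\wh H$); thus $\wt A$ becomes an $\wh H$-$\Ptens$-module algebra, and uniqueness of the extended action is immediate from density.

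Step (iii) is the technical heart. The underlying locally convex space of $\wt A\Psmash\wh H$ is $\wt A\Ptens\wh H$ with the projective tensor topology, defined by seminorms $p\otimes q$ with $p,q$ continuous submultiplicative on the respective factors. Using Sweedler's notation and the bounds from step (i), a direct computation on elementary tensors gives
$$(p\otimes q)\bigl((a\otimes h)(a'\otimes h')\bigr)\le p(a)p(a')q(h')\cdot(q_p\otimes q)(\Delta h),$$
so $p\otimes q$ is in general not submultiplicative. However, since $\wh H$ is a $\Ptens$-bialgebra and $\wh H\Ptens\wh H$ is Arens-Michael with fundamental family $\{q_1\otimes q_2\}$, the pullback $(q_p\otimes q)\circ\Delta$ is itself a continuous submultiplicative seminorm on $\wh H$. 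The main obstacle is therefore to enlarge $q$ to a continuous submultiplicative seminorm $\tilde q\ge q$ on $\wh H$ satisfying $(q_p\otimes\tilde q)\circ\Delta\le\tilde q$. I would obtain such a $\tilde q$ by iterating the pullback operation: set $q_0:=q$, and at each stage choose a continuous submultiplicative seminorm $q_{n+1}$ dominating both $q_n$ and $(q_p\otimes q_n)\circ\Delta$; then pass to a suitable (weighted) supremum of the $q_n$'s, exploiting that continuous submultiplicative seminorms on $\wh H$ form a directed system closed under max. With such a $\tilde q$ in hand, the estimate above collapses to submultiplicativity of $p\otimes\tilde q$ on elementary tensors, and taking infima over tensor representations extends this to the full projective seminorm; since the seminorms $p\otimes\tilde q$ dominate the $p\otimes q$ they are cofinal and hence define the same topology on $\wt A\Ptens\wh H$, establishing the Arens-Michael property of $\wt A\Psmash\wh H$.
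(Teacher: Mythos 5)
This proposition is quoted from \cite[Proposition 2.1]{Pir_smash}, so the present paper contains no proof to compare against; judging your argument on its own terms, steps (i) and (ii) are sound and standard (the operator seminorms $q_p$ are submultiplicative, hence automatically continuous on $H$, and the estimate $p(h\cdot a)\le q_p(h)p(a)$ lets you extend the action and its axioms by density). The problem is step (iii), and it is not a fixable gap but a dead end: the seminorm $\tilde q$ you want does not exist in general, and no cofinal family of submultiplicative seminorms of product form $p\otimes\tilde q$ exists on $\wt{A}\Psmash\wh{H}$. The paper's own Example is the witness. There $A=\CC[x]$, $H=\CC\Z_+$ with $\Delta\delta_j=\delta_j\otimes\delta_j$, the deformation parameter (call it $\lambda$, $|\lambda|<1$) acts by $\delta_j\cdot x^i=\lambda^{-ij}x^i$, and for $p=\|\cdot\|_N$ with $N\ge 1$ one computes $q_p(\delta_j)=|\lambda|^{-Nj}>1$. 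Since the projective tensor seminorm is a cross-seminorm, your condition $(q_p\otimes\tilde q)(\Delta\delta_j)\le\tilde q(\delta_j)$ reads $|\lambda|^{-Nj}\,\tilde q(\delta_j)\le\tilde q(\delta_j)$ and forces $\tilde q(\delta_j)=0$ for all $j\ge1$; such a $\tilde q$ cannot dominate a given nonzero $q$. Worse, mere submultiplicativity of $p\otimes\tilde q$ already fails: applying it to $(1\otimes\delta_j)(x\otimes 1)=\lambda^{-j}x\otimes\delta_j$ forces $\tilde q(\delta_j)=0$ for all large $j$, so any submultiplicative product seminorm factors through a finite-dimensional quotient of $\wh{H}$ in the $H$-direction. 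Yet Proposition~\ref{prop:Psmash_AM} is asserted (and used) precisely for this bialgebra.

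There is also a local flaw in the iteration you propose even before one reaches this obstruction: a supremum $\sup_n c_nq_n$ of submultiplicative seminorms is submultiplicative only when the weights satisfy $c_n\le c_n^2$, i.e.\ $c_n\ge 1$, in which case nothing forces the supremum to be finite or continuous; weights $c_n<1$ restore finiteness but destroy submultiplicativity. The way out is to abandon product seminorms altogether and construct submultiplicative seminorms on $A\Tens H$ that couple the two factors. In the Example the topology of $\CC[[x]]\Ptens\cO(\CC)$ is generated by the seminorms $\sum_{i\le N}\sum_j|c_{ij}|\,|\lambda|^{ij}\rho^j$, whose weights $|\lambda|^{ij}$ depend jointly on $i$ and $j$; these are submultiplicative for the smash multiplication and are equivalent, as a family, to the product seminorms $\|\cdot\|_{\rho,N}$ without any single one of them being of product form. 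Your proof needs an analogue of this construction in general, which is exactly the content of the argument in \cite{Pir_smash} that you would have to reproduce.
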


\section{The results}

\begin{theorem}
\label{thm:main}
Let $H$ be a Hopf algebra, and let $A$ be an $H$-module algebra.
Then the canonical map $A\Smash H\to \wt{A}\Psmash\wh{H}$
extends to a $\Ptens$-algebra isomorphism
\begin{equation*}
(A\Smash H)\sphat\, \cong \wt{A}\Psmash\wh{H}.
\end{equation*}
\end{theorem}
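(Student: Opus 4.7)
The strategy is to verify that $\wt{A}\Psmash\wh{H}$, equipped with the canonical map from $A\Smash H$, satisfies the universal property that characterises the Arens-Michael envelope of $A\Smash H$. By Proposition~\ref{prop:Psmash_AM}, $\wt{A}\Psmash\wh{H}$ is already an Arens-Michael algebra, so it only remains to produce, for each continuous homomorphism $\varphi\colon A\Smash H\to B$ into an Arens-Michael algebra $B$, a (unique) continuous extension $\wh\varphi\colon \wt{A}\Psmash\wh{H}\to B$.

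The first step is to construct the canonical map $A\Smash H\to\wt{A}\Psmash\wh{H}$ from the inclusions $A\hookrightarrow\wt{A}$ and $H\hookrightarrow\wh{H}$ and to check that the composite of this map with any continuous submultiplicative seminorm on $\wt{A}\Psmash\wh{H}$ is a submultiplicative seminorm on $A\Smash H$, so that, by the universal property, a continuous homomorphism $(A\Smash H)\sphat\,\to\wt{A}\Psmash\wh{H}$ is obtained. Next, given $\varphi\colon A\Smash H\to B$, restrict along \eqref{h=1} and \eqref{a'=1} to obtain algebra homomorphisms $\varphi_A\colon A\to B$ and $\varphi_H\colon H\to B$. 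Since $B$ is Arens-Michael, $\varphi_H$ extends uniquely to a continuous homomorphism $\wh{\varphi}_H\colon\wh{H}\to B$.

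The crucial step, which is also where the Hopf (as opposed to merely bialgebra) hypothesis enters, is to show that $\varphi_A$ extends to $\wt{A}$, i.e., that the pullback along $\varphi_A$ of any continuous submultiplicative seminorm $\|\cdot\|$ on $B$ is an $H$-stable seminorm on $A$. For this I plan to exploit the identity
\begin{equation*}
h\cdot a\otimes 1 \;=\; \sum_{(h)}(1\otimes h_{(1)})(a\otimes 1)(1\otimes S(h_{(2)})),
\end{equation*}
which is a direct consequence of \eqref{tau_prod}, \eqref{smashprod_Sw} and the antipode identity $\sum h_{(1)}S(h_{(2)})=\eps(h)\cdot 1$. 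Applying $\varphi$ and submultiplicativity yields
\begin{equation*}
\|\varphi_A(h\cdot a)\| \;\le\; \Bigl(\sum_{(h)}\|\wh\varphi_H(h_{(1)})\|\,\|\wh\varphi_H(S(h_{(2)}))\|\Bigr)\,\|\varphi_A(a)\|,
\end{equation*}
so the seminorm $a\mapsto\|\varphi_A(a)\|$ on $A$ is $H$-stable and obviously submultiplicative; it therefore extends continuously to $\wt{A}$, giving a continuous homomorphism $\wh{\varphi}_A\colon\wt{A}\to B$.

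Finally, the jointly continuous bilinear map $\wt{A}\times\wh{H}\to B$, $(a,h)\mapsto\wh\varphi_A(a)\wh\varphi_H(h)$ linearises to a continuous linear map $\wh\varphi\colon\wt{A}\Ptens\wh{H}\to B$. To see that $\wh\varphi$ is multiplicative with respect to the analytic smash product structure, I verify multiplicativity first on the dense subalgebra $A\Smash H$ using \eqref{smashprod_Sw} (where it is just the fact that $\varphi$ is a homomorphism) and then pass to the completion by joint continuity of both multiplications; uniqueness of $\wh\varphi$ follows from density of $A\Smash H$ in $\wt{A}\Psmash\wh{H}$. This establishes the universal property, hence the isomorphism. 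The main technical obstacle is the $H$-stability step, which hinges on the antipode and is precisely where the result will be shown (in the next section) to fail for non-Hopf bialgebras; the remaining steps are a careful but routine manipulation of the universal property and density arguments.
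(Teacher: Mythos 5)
Your proposal is correct and follows essentially the same route as the paper: both arguments reduce the problem to showing that the pullback of a submultiplicative seminorm on $B$ along $a\mapsto\varphi(a\otimes 1)$ is $H$-stable, and both establish this via the same conjugation identity $h\cdot a\otimes 1=\sum_{(h)}(1\otimes h_{(1)})(a\otimes 1)(1\otimes S(h_{(2)}))$, which is exactly where the antipode is used. The only cosmetic difference is that the paper phrases the final step as continuity of $\varphi$ for the projective tensor topology on $A\Smash H$ followed by extension by density, whereas you linearise the bilinear map $(a,h)\mapsto\wh\varphi_A(a)\wh\varphi_H(h)$ and check multiplicativity on the dense subalgebra; these are the same argument in different packaging.
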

\begin{proof}
Let $\varphi\colon A\Smash H\to B$ be a homomorphism to an
Arens-Michael algebra $B$. We endow $A$ and $H$ with
the topologies inherited from $\wt{A}$ and $\wh{H}$, respectively.
Since the canonical image of $A\Smash H$
is dense in $\wt{A}\Psmash\wh{H}$, it suffices to show that $\varphi$
is continuous with respect to the projective tensor product topology on $A\Smash H$.

Define $\varphi_1\colon A\to B$
and $\varphi_2\colon H\to B$ by $\varphi_1(a)=\varphi(a\otimes 1)$
and $\varphi_2(h)=\varphi(1\otimes h)$.
Clearly, $\varphi_1$ and $\varphi_2$ are algebra homomorphisms.
Using \eqref{h=1}, we have
\begin{equation*}
\varphi(a\otimes h)=
\varphi\bigl((a\otimes 1)(1\otimes h)\bigr)=
\varphi_1(a)\varphi_2(h)
\end{equation*}
for each $a\in A,\; h\in H$. Therefore we need only prove that $\varphi_1$
and $\varphi_2$ are continuous.

Let $\|\cdot\|$ be a continuous submultiplicative seminorm on $B$.
Then the seminorms $a\mapsto \| a\|'=\| \varphi_1(a)\|\; (a\in A)$
and $h\mapsto \| h\|''=\| \varphi_2(h)\|\; (h\in H)$
are submultiplicative. This implies, in particular, that $\varphi_2$ is continuous.
To prove the continuity of $\varphi_1$, we have to show that $\|\cdot\|'$
is $H$-stable.

For each $h\in H,\; a\in A$ we have the following identities in $A\Smash H$:
\begin{align*}
h\cdot a\otimes 1
&=\sum_{(h)} h_{(1)}\cdot a\otimes\eps(h_{(2)})1\\
&=\sum_{(h)} h_{(1)}\cdot a\otimes h_{(2)}S(h_{(3)})\\
&=\sum_{(h)} \tau(h_{(1)}\otimes a)(1\otimes S(h_{(2)}))
&& \text{by \eqref{tau_Sw} and \eqref{a'=1}}\\
&=\sum_{(h)} (1\otimes h_{(1)})(a\otimes 1)(1\otimes S(h_{(2)}))
&& \text{by \eqref{tau_prod}}.
\end{align*}
Therefore
\[
\begin{split}
\| h\cdot a\|'
&=\|\varphi_1(h\cdot a)\|
=\|\varphi(h\cdot a\otimes 1)\|\\
&=\Bigl\|\sum_{(h)} \varphi_2(h_{(1)}) \varphi_1(a) \varphi_2(S(h_{(2)}))\Bigr\|\\
&\le\sum_{(h)} \| h_{(1)}\|'' \| a\|' \| S(h_{(2)})\|''
=C\| a\|',
\end{split}
\]
where $C=\sum_{(h)} \| h_{(1)}\|'' \| S(h_{(2)})\|''$.
Thus $\|\cdot\|'$ is
$H$-stable, and so $\varphi_1$ is continuous.
In view of the above remarks,
$\varphi$ is also continuous, and so it uniquely extends to a $\Ptens$-algebra
homomorphism $\wt{A}\Psmash\wh{H}\to B$. This completes the proof.
\end{proof}

\begin{example}
It is natural to ask whether Theorem~\ref{thm:main} holds in the more general situation
where $H$ is a bialgebra. The following example shows that the answer is negative.
Let $A=\CC[x]$ be the polynomial algebra, and let the additive semigroup $\Z_+$
act on $A$ by
\[
(k\cdot f)(x)=f(q^{-k}x)\quad (f\in\CC[x],\; k\in\Z_+),
\]
where $q\in\CC\setminus\{ 0\}$ is a fixed constant.
Then $A$ becomes an $H$-module algebra, where $H=\CC\Z_+$ is the semigroup algebra
of $\Z_+$.

Given $k\in\Z_+$, let us write $\delta_k$ for the corresponding element of $H$.
If we identify $H$ with the polynomial algebra $\CC[y]$ by sending
the generator $\delta_1\in H$ to $y$,
then we obtain a vector space isomorphism
$A\Smash H\cong\CC[x,y]$. A straightforward computation shows that
the resulting multiplication on $\CC[x,y]$ is given by the formula
$xy=qyx$. In other words, we can identify $A\Smash H$ with
Manin's quantum plane \cite{Manin}
\[
\CC_q[x,y]=\CC\bigl\la x,y \,\bigl|\, xy=qyx\bigr\ra.
\]

Suppose now that $|q|<1$, and let $\|\cdot\|$ be an $H$-stable seminorm on $A$.
Then there exists $C>0$ such that $\| \delta_1\cdot f\|\le C\| f\|$ for all $f\in A$.
Setting $f=x^n$ and using the relation $\delta_1\cdot x^n=q^{-n} x^n$, we see that
\[
|q|^{-n} \| x^n\| \le C\| x^n\| \quad (n\in\Z_+).
\]
Since $|q|<1$, we conclude that there exists $N\in\N$ such that
$\| x^n\|=0$ for $n>N$.

It is easy to see that each seminorm of the form
\[
\| a\|_N=\sum_{i=0}^N |c_i| \quad \bigl(a=\sum c_i x^i\in A\bigr)
\]
is submultiplicative and $H$-stable. Moreover,
it follows from the above remarks that each $H$-stable seminorm on $A$ is dominated
by $\|\cdot\|_N$ for some $N$. Therefore the $H$-completion $\wt{A}$
is the completion of $A$ with respect to the topology generated by the
seminorms $\|\cdot\|_N,\; N\in\N$. Thus $\wt{A}$ can be identified with the algebra
$\CC[[x]]$ of formal power series endowed with the topology of coordinatewise
convergence. Since $\wh{H}$ is isomorphic to the algebra of entire functions
$\cO(\CC)$ \cite{T2}, we can identify the underlying
topological vector space of $\wt{A}\Psmash\wh{H}$ with
\begin{equation}
\label{formal_entire}
\CC[[x]]\Ptens\cO(\CC)
\cong
\Bigl\{ a=\sum_{i,j\in\Z_+} c_{ij} x^i y^j :
\| a\|_{\rho,N}=\sum_{i=0}^N \sum_{j=0}^\infty |c_{ij}|\rho^j<\infty\;\forall\rho>0\Bigr\}.
\end{equation}
On the other hand (see \cite[Corollary 5.14]{Pir_qfree}), the Arens-Michael envelope
of the quantum plane $\CC_q[x,y]$ (where $|q|<1$) can be identified with
\[
\Bigl\{ a=\sum_{i,j\in\Z_+} c_{ij} x^i y^j :
\| a\|_{\rho}=\sum_{i,j=0}^\infty |c_{ij}| |q|^{ij} \rho^{i+j}<\infty\;\forall\rho>0\Bigr\}.
\]
Comparing this with \eqref{formal_entire}, we see that the canonical map
$(A\Smash H)\sphat\to\wt{A}\Psmash\wh{H}$ (which always exists by the very
definition of the Arens-Michael envelope and by Proposition~\ref{prop:Psmash_AM})
is not onto. Thus Theorem~\ref{thm:main} cannot be generalized to non-Hopf
bialgebras.
\end{example}

\end{document}